\newtheorem{theorem}{Theorem}[section]
\newtheorem{lemma}[theorem]{Lemma}
\newtheorem{proposition}[theorem]{Proposition}
\theoremstyle{definition}
\renewcommand\P{\mathbb{P}}
\newcommand\R{\mathbb{R}}
\newcommand\1{\mathds{1}}
\newcommand{\N}{\mathbb{N}}
\newcommand{\Z}{\mathbb{Z}}
\renewcommand\d{\, \mathrm{d}}
\renewcommand{\phi}{\varphi}
\newcommand{\F}{\mathcal{F}}
\newcommand{\abs}[1]{\left|#1\right|}
\renewcommand{\F}{\mathcal{F}}
\newcommand{\nF}{\mathcal{F}}
\newcommand{\menge}[1]{\left\lbrace #1\right\rbrace }
\newcommand{\p}{\partial}
\newcommand{\eps}{\varepsilon}
\newcommand{\dd}{\hspace{1pt}{\rm d}\hspace{0.5pt}}
\newcommand{\ee}{{\rm e}\hspace{1pt}}
\newcommand{\EE}{{\rm E}}
\date{}
\begin{document}

\title{Transport in a stochastic Goupillaud medium}
\author{Florian Baumgartner\footnote{Institute of Mathematics, University of Innsbruck, Technikerstra\ss e 13, 6020 Innsbruck, Austria, {florian.baumgartner@uibk.ac.at}}\\
Michael Oberguggenberger\footnote{Unit of Engineering Mathematics, University of Innsbruck, Technikerstra\ss e 13, 6020 Innsbruck, Austria, {michael.oberguggenberger@uibk.ac.at}}\\
Martin Schwarz\footnote{Unit of Engineering Mathematics, University of Innsbruck, Technikerstra\ss e 13, 6020 Innsbruck, Austria, {martin.schwarz@uibk.ac.at}}}
\maketitle

{\bf Abstract}{ This paper is part of a project that aims at modelling wave propagation in random media by means of Fourier integral operators. A partial aspect is addressed here, namely explicit models of stochastic, highly irregular transport speeds in one-dimensional transport, which will form the basis for more complex models. Starting from the concept of a Goupillaud medium (a layered medium in which the layer thickness is proportional to the propagation speed), a class of stochastic assumptions and limiting procedures leads to characteristic curves that are L\'evy processes. Solutions corresponding to discretely layered media are shown to converge to limits as the time step goes to zero (almost surely pointwise almost everywhere). This translates into limits in the Fourier integral operator representations.}

\section{Introduction}
\label{sec:intro}

This contribution is part of a long-term project that aims at modelling wave propagation in random media by means of Fourier integral operators. The intended scope includes, for example, the equilibrium equations in linear elasticity theory
\[
\rho\frac{\p^2u_i}{\p t^2} - \sum_{j,k,l}\frac{\p}{\p x_j}c_{ijkl}\frac{\p u_l}{\p x_k} = f_i,\quad i = 1,2,3
\]
or, more generally, hyperbolic systems of the form
\begin{equation}\label{eq:hyp}
\frac{\p u}{\p t} = \sum_{j=1}^m A_{j}(t,x)\frac{\p u}{\p x_j} + B(t,x)u + f(t,x),
\end{equation}
to be solved for the unknown functions $u(t,x) = (u_1(t,x),\cdots, u_n(t,x))$. Here $t$ denotes time, $x$ is an $m$-dimensional space variable, and $A_j$, $B$ are $(n\times n)$-matrices.

Our specific interest is in the situation where the coefficient matrices are random functions of the space variable $x$, i.e., random fields. Such a situation typically arises in seismology (propagation of acoustic waves) or in material science (damage detection). There are many ways of setting up models for random fields (see e. g. \cite{GhanemSpanos:1991, Matthies:2008}), but typically random fields describing randomly perturbed media have continuous, but not differentiable paths. As coefficients in hyperbolic equations such as (\ref{eq:hyp}), this degree of regularity is too low and does not allow one to apply the classical solution theory for hyperbolic equations. In addition, the solution depends nonlinearly on the coefficients, so it is generally impossible to directly calculate the stochastic properties of the solution from knowledge of the distribution of the coefficients.

The main thrust of the project will be to write the solution to equations like (\ref{eq:hyp}) as a sum of Fourier integral operators
\begin{equation}\label{eq:FIO}
u(t,x) = \frac1{(2\pi)^m}\iint\ee^{i\phi(t,x,y,\eta)}a(t,x,y,\eta)u_0(y)\, \dd y\dd\eta
\end{equation}
applied to the initial data $u_0$. This is possible in the case of deterministic, smooth coefficients (up to a smooth error). The ultimate goal of the project will be to set up the stochastic model of the medium through the phase function $\phi$ and the amplitude $a$ of the Fourier integral operator, rather than through a direct stochastic model of the coefficients, as described in \cite{OberSchwarz:2014}.

A second thrust is in understanding wave propagation in strongly irregular stochastic media with a sufficiently simple structure and tractable properties, in order to get insight into what stochastic processes are suitable to be entered as phase functions and amplitudes. This brings us to the topic of this paper, namely, wave propagation in a \emph{Goupillaud medium} (the name goes back to \cite{Goupillaud:1961}). In this contribution, we will work out the case of one-dimensional transport under assumptions that will lead to characteristic curves given by an increasing L\'evy process with possibly infinitely many jumps on each subinterval.

One-dimensional transport is described by the equation
\begin{equation} \label{eq:trans}
\begin{array}{rcl}
\dfrac{\p}{\p t} u(t,x) + c(x) \dfrac{\p}{\p x} u(t,x) & = & 0\\[10pt]
u(0,x)  & = & u_0(x)
\end{array}
\end{equation}
The material properties of the medium are encoded in the transport speed $c(x)$. The Goupillaud assumption is that $c(x)$ is a piecewise constant function so that the travel time in each layer is the same. That is, the thickness of layer number $k$ is proportional to the propagation speed $c_k$ in that layer.

Further, the propagation speeds $c_k$ will be given by independent, identically distributed random variables. At this stage, various choices of the type of random variables as well as scalings are possible. For the wave equation, such scalings leading to fairly regular limiting processes have been introduced in \cite{BurridgeGoupillaud:1988} and studied in \cite{FouqueETAL:2007, NairWhite:1991}. Our procedure of dyadic refinements on the time axis will lead to infinitely divisible, positive random variables. It turns out that they can be constructed as increments of a strictly increasing L\'evy process, a so-called subordinator with positive drift. As the time step goes to zero, the characteristic curve of (\ref{eq:trans}) passing through the origin is a path of a L\'evy process.
We will show that the characteristic curves of the discrete Goupillaud medium converge (almost surely at almost every $(x,t)$) to limiting curves (actually translates of the obtained L\'evy process), and that the corresponding solutions and their Fourier integral operator representations converge as well.

The limiting function $u(x,t)$ is constant along the limiting characteristic curves, as in the case of classical transport. However, the limiting characteristics may possibly have infinitely many jumps on each interval. Due to this high degree of singularity, we cannot give a meaning to the limiting function $u(x,t)$ as a solution to (\ref{eq:trans}) -- it is just a limit of piecewise classical solutions. This situation is quite common in the theory of singular stochastic partial differential equations, see e.g. \cite{Hairer:2014}.

A few remarks about the regularity of the coefficient $c(x)$ in (\ref{eq:trans}) is in order. If the coefficient is Lipschitz continuous, classical solutions can be readily constructed. If the coefficient is a piecewise constant, positive function, piecewise classical solutions are obtained easily. In case of lower regularity of the coefficient, various approaches have been proposed in the literature. We mention the work of DiPerna and Lions \cite{DiPernaLions:1989}, Bouchut and James \cite{BouchutJames:1998}, and Ambrosio et al. \cite{Ambrosio:2004, Ambrosio:2008} in the deterministic, $x$-dependent case; for a recent survey, see Haller and H\"ormann \cite{HallerHoermann:2008}. In the stochastic case, recent work of Flandoli \cite{Flandoli:2011} shows how solutions can be constructed adding noise in the transport term. Finally, another different line of development is constituted by extending the reservoir of generalized functions, either in the direction of white noise analysis or in the direction of Colombeau theory. A representative article pursuing and comparing both approaches is by Pilipovi\'{c} and Sele{\v{s}}i \cite{PilipovicSelesi:2010b, PilipovicSelesi:2010a}.

The plan of the paper is as follows: In the first part, the stochastic Goupillaud medium is set up and analyzed. In the second part, the limiting behavior as the time step goes to zero is established. The paper ends with some conclusions and open questions.

\section{Setting up the Goupillaud medium}
\label{sec:Goupillaud}

If the initial data $u_0$ are differentiable and the propagation speed $c$ is Lipschitz continuous, classical solutions to the transport equation (\ref{eq:trans}) can be readily obtained by the method of characteristics. The characteristic curves are the integral curves of the vector field $\p/\p t + c(x) \p/\p x$ passing through the point $x$ at time $t$, that is, the solutions to the ordinary differential equation
\[
   \frac{\d }{\d \tau}{\gamma}(\tau;x,t) = c(\gamma(\tau;x,t)),\quad \gamma(t;x,t)=x.
\]
Then the solution to (\ref{eq:trans}) is given by
\[
  u(x,t)= u_0(\gamma(0;x,t)).
\]
Under the mentioned assumptions, the function $u$ is continuously differentiable, and the solution is unique in this class.

If the speed parameter $c$ is constant the characteristic curves are simply given by $\gamma(\tau;x,t)=x+c(\tau-t)$. If the parameter is piecewise constant one can compute the characteristic curves as polygons. Assuming continuity across interfaces, the solution $u$ is given as a continuous, piecewise differentiable function, which solves (\ref{eq:trans}) in the weak sense.

\subsection{Dyadic deterministic structure}
\label{subsec:dyadic}

We begin by setting up the discrete, deterministic Goupillaud medium.
Take an equidistant sequence $(t_j)_{j\in\N}$ of points of time with $t_0=0$ and time step $\Delta t\equiv t_j-t_{j-1}$ for all $j=1,2,\ldots$. Furthermore, take a strictly increasing sequence $(x_k)_{k\in\Z}$ with $x_0=0$ and $x_k\to\pm \infty$ as $k\to \pm \infty$ and let $\Delta x_k = x_{k}-x_{k-1}$. The coefficient $c(x)$ is defined as
\begin{align} \label{speedparameter}
c(x) = \sum_{k=-\infty}^{\infty} \frac{\Delta x_k}{\Delta t} \1_{[x_{k-1},x_{k})}(x).
\end{align}
In other words, the time for passing a layer $\Delta x_k$ is constant, namely $\Delta t$. For an illustration see Figure~1.
\begin{figure}[htb] 	\label{fig:Gitter_grob}
	\centering
	\includegraphics[width = 0.49\textwidth]{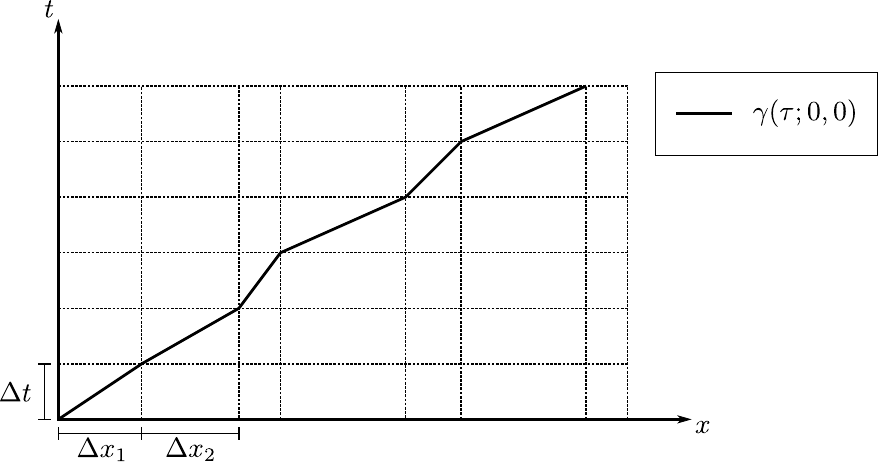}
	\includegraphics[width = 0.49\textwidth]{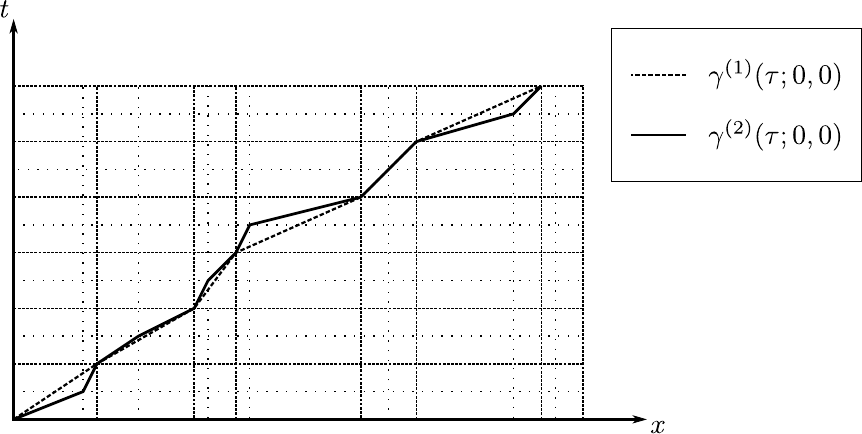}
	\caption{Illustration of $\tau\rightarrow \gamma(\tau;0,0)$ in the Goupillaud medium (left). Refining the grid on the right preserves existing grid points $\gamma^{(1)}(\cdot;0,0)$ (dashed) and refinement $\gamma^{(2)}(\cdot;0,0)$.}
\end{figure}

Call $c_k$ the value of $c(x)$ in the $k$-th layer, that is, $x_{k-1}\leq x < x_{k}$. Then the Goupillaud relation
\begin{align}\label{relationship:c:deltat}
 {\Delta x_k} = c_k\Delta t
\end{align}
holds for all $k$, with constant $\Delta t$. The structure of the Goupillaud medium makes computing the values of the characteristic curves $\gamma(\tau;x,t)$ in the grid points very simple. In fact,
\begin{align}\label{eq:grid0}
\gamma\left(t_j;x_k, t_l\right)  = x_{j+k-l}
\end{align}
for all integers $j,k,l$. Since every point $(x,t)$ is just a convex combination of the neighboring grid points, the values $\gamma(\tau;x,t)$ can be easily obtained anywhere.

We now set up a dyadic refinement of the initial grid. Define
\[
 \Delta t^{(N)}= 2^{-N},\quad t^{(N)}_j=\frac{j}{2^N}
\]
and let $x^{(N)}_k\in \R$, $k\in\Z$, be a strictly increasing sequence of spatial points (or equivalently, propagation speeds $c^{(N)}_k > 0$ satisfying
$\Delta x^{(N)}_k = c^{(N)}_k \Delta t^{(N)}$). We require that each resulting grid is a dyadic refinement of the previous one, that is
\begin{equation}\label{dyadicref}
 \big(t^{(N+1)}_{2j},\, x_{2k}^{(N+1)}\big)=\big(t^{(N)}_{j},\, x_{k}^{(N)}\big),
\end{equation}
as illustrated in Figure~1. This condition implies
$$ \Delta x^{(N)}_k= \Delta x^{(N+1)}_{2k-1}+ \Delta x^{(N+1)}_{2k}.$$
Inductively, one obtains
\begin{align} \label{eq:consistency}
\quad \Delta x_{k}^{(N)}= \sum_{i=1}^{2^M} \Delta x_{(k-1)2^M+i}^{(N+M)}
\end{align}
for all $N,M\in\N,k\in\Z$.
The value of the characteristic curve $\gamma^{(N)}$ in the grid points is readily obtained according to (\ref{eq:grid0}). For any integer $N,j,k,l$ we have
\begin{align}\label{eq:grid}
\gamma^{(N)}\left(t_j^{(N)};x_k^{(N)}, t^{(N)}_l\right)  = x^{(N)}_{j+k-l}.
\end{align}
For any  $N\in \N$ and $\tau\in [t_{k-1}^{(N)}, t_{k}^{(N)})$, the characteristic curve through the origin $\xi^{(N)}(\tau)=\gamma^{(N)}(\tau;0,0)$ can be represented as
\begin{equation}\label{eq:lambda}
\xi^{(N)}(\tau)= \alpha^{(N)} (\tau) \xi^{(N)}(t_{k-1}^{(N)})+ \big(1-\alpha^{(N)} (\tau) \big)\xi^{(N)}(t_{k}^{(N)}),
\end{equation}
where
$$\alpha^{(N)}(\tau) =\frac{t_{k}^{(N)}-\tau}{t_{k}^{(N)}-t_{k-1}^{(N)}}=\big(t_{k}^{(N)}-\tau\big)2^N\in [0,1].$$
and $\xi^{(N)}(t^{(N)}_k)=x^{(N)}_k$ by \eqref{eq:grid}.

In other words, $\xi^{(N)}$ is an increasing polygon through $(t_k^{(N)},x_k^{(N)})$, $k\in\Z$.
For $(x,t)\in\R^2$ one obtains the characteristic curve through $(x,t)$ by
\begin{equation}\label{eq:charN}
\gamma^{(N)}(\tau;x,t)= \xi^{(N)}\big(\tau+(\xi^{(N)})^{-1}(x)-t\big),\quad \tau\in\R,
\end{equation}
i.e., by shifting $\xi^{(N)}$ in time direction such that it passes through $(x,t)$.

\subsection{The stochastic model}
\label{subsec:stochmod}

In this subsection, we formulate the stochastic assumptions underlying our model of a randomly layered medium, in which $\Delta x_k^{(N)}$ is random. In the sequel, we will denote random elements by capital letters and realizations by the corresponding small ones. Let $(\Omega,\F,\P)$ be a probability space which is rich enough. Our decisive assumption is that for each $N\in\N$, the increments are positive i.i.d.\ random variables $\Delta X_k^{(N)}$, $k\in\Z$,

\smallskip
Together with our previous consistency assumption \eqref{eq:consistency}, this implies that $\Delta X_{k}^{(N)}$ is infinitely divisible for every $k\in\Z$ and $N\in\N$ (using e.g. \cite[Thm. 15.12]{Kallenberg:2002}). Let $\mu =\P\circ (\Delta X^{(0)}_1)^{-1}$ be the distribution of $\Delta X^{(0)}_1$. Then $\Delta X_{k}^{(N)}\sim \mu^{*1/2^N}$, $k\in\N$, the $2^N$-th unique root of $\mu$, cf.\ \cite[p.~34]{Sato:1999}. Again by \cite[Thm. 15.12]{Kallenberg:2002}, there exists a L\'evy process $X=(X(t))_{t\in\R}$ on a probability space, w.l.o.g.\ say $(\Omega,\nF,\P)$, with $\P\circ X(1)^{-1}=\mu$, that is, $\Delta X^{(0)}_1$ has the same distribution as $X(1)$.
These conditions are met, e.g., by Poisson processes or Gamma processes with positive drift.

\smallskip
Having derived the L\'evy process, we may use it as a starting point for defining the stochastic Goupillaud medium. We let $t_k^{(N)} = k/2^N$ as in subsection~\ref{subsec:dyadic} and define
$$
\Delta X_{k}^{(N)}= X\big(t_k^{(N)}\big)-X\big(t_{k-1}^{(N)}\big)
$$
and
$$
   X_{k}^{(N)} = \sum_{i=1}^k \Delta X_{i}^{(N)} = X\big(t_k^{(N)}\big)
$$
for $k>0$ and similarly for $k\leq 0$. The consistency condition \eqref{eq:consistency} is clearly satisfied.
Let furthermore $X^{(N)}(\omega,\cdot)$ be the piecewise affine interpolation of $X(\omega,\cdot)$ through the grid points $(t_k^{(N)},X_k^{(N)})$ as in \eqref{eq:lambda}. This construction is carried out pathwise for fixed $\omega\in\Omega$.

%


\section{Limits as the time step goes to zero}
\label{sec:convergence}

The main result of this section is that the characteristic curves of the discrete Goupillaud medium converge to limiting curves (almost surely almost everywhere). This will imply that the solutions to the transport equation converge to a limit as well (in a sense to be made precise). The crucial observation is that the paths of a L\'evy process are c\`adl\`ag almost surely, i.e., they are continuous from the right and have left-hand limits.

\subsection{A convergence result for c\`adl\`ag functions}
\label{subsec:cadlag}

The first convergence result holds generally for c\`adl\`ag functions. Thus let $t\to\xi(t)$ be an increasing c\`adl\`ag function with $\xi(t)\rightarrow \pm \infty$ for $t\rightarrow \pm \infty$ and let $(x,t)\in\R^2$. Set
$$
\xi^*(x)= \inf\menge{t\in\R: \xi(t)\geq x}
$$
which is Borel measurable, and
\[
\gamma(\tau;x,t)= \xi(\tau+\xi^*(x)-t),\quad \tau\in\R.
\]
Further, let $\xi^{(N)}$ be a piecewise linear interpolation of $\xi$ that coincides with $\xi$ at the grid points $t_k^{(N)} = k/2^N$, $k \geq 0$, and define $\gamma^{(N)}(\tau;x,t)$ by formula (\ref{eq:charN}).

\begin{lemma}\label{lem:cadlag}
	Let $(x,t)\in \R^2$, $\xi$, $\xi^{(N)}$, $\gamma$, $\gamma^{(N)}$ as described above.
 	If the function $\tau\to\gamma(\tau;x,t)$ does not have a jump in $\tau_0$, then
	\[ \lim_{N\to\infty}\gamma^{(N)}(\tau_0;x,t)= \gamma(\tau_0;x,t),\]
	i.e., $\gamma^{(N)}(\,\cdot\,; x,t)$ converges pointwise to $\gamma(\,\cdot\,;x,t)$ at the points of continuity of $\gamma(\,\cdot\,; x,t)$.
\end{lemma}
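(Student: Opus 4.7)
The plan is to write $\gamma^{(N)}(\tau_0;x,t) = \xi^{(N)}(u_N)$ with $u_N := \tau_0 + (\xi^{(N)})^{-1}(x) - t$, and to reduce the lemma to two independent ingredients: (a) convergence of the generalised inverses $(\xi^{(N)})^{-1}(x) \to \xi^{*}(x)$, and (b) the fact that $\xi^{(N)}(u_N) \to \xi(u)$ whenever $u_N \to u$ at a point of continuity of $\xi$. Once both are in hand the combination is immediate, because the hypothesis translates verbatim into continuity of $\xi$ at $u := \tau_0 + \xi^{*}(x) - t$: indeed $\gamma(\cdot\,;x,t)$ jumps at $\tau_0$ if and only if $\xi$ jumps at $u$.

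For ingredient (a), set $s^{*} = \xi^{*}(x)$ and $s_N := \inf\{t : \xi^{(N)}(t) \geq x\}$ (using the infimum convention, which coincides with the pointwise inverse on strictly increasing stretches). Right continuity of $\xi$ together with the infimum characterisation give $\xi(s^{*}) \geq x$ while $\xi(t) < x$ for every $t < s^{*}$. For the upper bound I would take $\bar t_N$ to be the smallest dyadic grid point $\geq s^{*}$; since $\xi^{(N)}(\bar t_N) = \xi(\bar t_N) \geq \xi(s^{*}) \geq x$, one has $s_N \leq \bar t_N \downarrow s^{*}$. For the lower bound, fix $\delta > 0$; as soon as $2^{-N} < \delta/2$ the grid point immediately above $s^{*} - \delta$ still lies below $s^{*}$, so $\xi$ there is $< x$, and by piecewise linear interpolation also $\xi^{(N)}(s^{*} - \delta) < x$, forcing $s_N > s^{*} - \delta$.

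For ingredient (b), I would bracket $u_N$ by consecutive grid points $t_{k_N}^{(N)} \leq u_N < t_{k_N+1}^{(N)}$; both endpoints lie within $2^{-N}$ of $u_N$ and hence converge to $u$. Continuity of $\xi$ at $u$ forces $\xi(t_{k_N}^{(N)})$ and $\xi(t_{k_N+1}^{(N)})$ to converge to $\xi(u)$, and since $\xi^{(N)}(u_N)$ is a convex combination of these two values, $\xi^{(N)}(u_N) \to \xi(u)$.

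The main obstacle will be ingredient (a): because the lemma is stated for a general increasing càdlàg function, $\xi$ (and consequently $\xi^{(N)}$) may have jumps and flat pieces. This forces the infimum convention for the inverse on both sides and demands careful bookkeeping of strict versus non-strict inequalities, invoking right continuity of $\xi$ at $s^{*}$ to secure the upper bound and the strict inequality $\xi(t) < x$ for $t < s^{*}$ to secure the lower bound.
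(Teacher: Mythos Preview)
Your proof is correct and follows essentially the same two-step structure as the paper: first show that the shifted arguments converge, $(\xi^{(N)})^{-1}(x)\to\xi^{*}(x)$, then use continuity of $\xi$ at the limiting argument to pass through the piecewise linear interpolant. The paper shortens your ingredient~(a) to a single observation---since $\xi$ and $\xi^{(N)}$ agree at the grid points and are both increasing, $(\xi^{(N)})^{-1}(x)$ and $\xi^{*}(x)$ must lie in the \emph{same} dyadic interval of length $2^{-N}$---and handles ingredient~(b) via a Billingsley-type $\varepsilon/3$ oscillation decomposition rather than sequential continuity, but the underlying logic is identical.
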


\begin{proof}
		 Fix $\varepsilon>0$ and $R>|\tau_0|+|\xi^*(x)|+|t|$ and define
		\begin{align*}
		s&:=\tau_0+\xi^*(x)-t\\
		s^{(N)}&:=\tau_0+(\xi^{(N)})^{-1}(x)-t
		\end{align*}
As $\xi$ is c\`adl\`ag there exist finitely many $(t_1,\ldots,t_\ell)\in[-R,R]$ such that
\begin{equation}\label{eq:Billingsley}
\forall r_1,r_2\in [t_i,t_{i+1}):\abs{\xi(r_1)-\xi(r_2)}<\frac{\varepsilon}{3},
\end{equation}
see e.g. \cite[Lemma~1, p.~110]{Billingsley:1999}. Since $\gamma$ is continuous in $\tau_0$ we can assume without loss of generality that $s\neq t_i$ for all $i$.
		
Since $\xi$ and $\xi^{(N)}$ coincide at the grid points and both are increasing, $(\xi^{(N)})^{-1}(x)$ and $\xi^*(x)$ belong to the same interval of length $1/2^N$, for every $N$. It follows that $|s^{(N)}-s|<1/2^N$ as well. We can choose $N$ large enough, such that both $s$ and $s^{(N)}$ belong to $(t_i + 1/2^N,t_{i+1}-1/2^N)$ for some
$i\in \{1,\ldots,\ell-1\}$. From (\ref{eq:Billingsley}) we get that
\[
  \abs{\gamma^{(N)}(\tau_0;x,t)-\gamma(\tau_0;x,t)} = \abs{\xi^{(N)}(s^{(N)})-\xi(s)}
         \leq  \abs{\xi^{(N)}(s^{(N)})-\xi(s^{(N)})} + \frac{\eps}3.
\]
Now choose $k$ such that
\[
   t_{i} < t_{k-1}^{(N)} \leq s^{(N)} \leq t_{k}^{(N)} < t_{i+1}.
\]
Using (\ref{eq:lambda}) and the fact that $\xi$ and $\xi^{(N)}$ coincide at all grid points $t_{j}^{(N)}$ we can write
\begin{eqnarray*}
  \xi^{(N)}(s^{(N)})-\xi(s^{(N)}) &=&
    \alpha^{(N)}(s^{(N)})\xi(t_{k-1}^{(N)}) + \big(1-\alpha^{(N)}(s^{(N)})\big)\xi(t_{k}^{(N)})-\xi(s^{(N)})\\
    &=& \big(1-\alpha^{(N)}(s^{(N)})\big)\big(\xi(t_{k}^{(N)}) - \xi(t_{k-1}^{(N)})\big)
        + \big(\xi(t_{k-1}^{(N)})-\xi(s^{(N)}\big).
\end{eqnarray*}
Recalling $\alpha^{(N)}\in[0,1]$ for all $N$ and invoking again (\ref{eq:Billingsley}) we conclude that
\[
  \abs{\gamma^{(N)}(\tau_0;x,t)-\gamma(\tau_0;x,t)} < \eps,
\]
which implies the desired convergence. \hfill \qed
\end{proof}

Denote by $T = (T_i)_{i\in\N}$ the (countable) set of jump points of the c\`adl\`ag function $\xi$. At fixed $(x,t)$, convergence may fail at those values $\tau$ for which $\tau+\xi^*(x)-t\in T$. This exceptional set is countable, but may be different for every $(x,t)$. Next, we fix $\tau_0$ and determine the set of all $(x,t)$ for which convergence fails. We are going to show that its two-dimensional Lebesgue measure $\lambda$ is zero.

\begin{lemma}\label{lem:zeromeasure}
Let $\tau_0\geq 0$; $\xi$, $\gamma$ as in Lemma\;\ref{lem:cadlag}. The set $M = \{(x,t): \gamma(\tau;x,t)\ {\rm jumps}\ {\rm at}\ \tau_0\}$ has Lebesgue measure zero.
\end{lemma}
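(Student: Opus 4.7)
The plan is to translate the jump condition on $\gamma$ into a condition on the single-variable function $\xi$, and then apply Fubini/Tonelli using the fact that $\xi$ has only countably many jumps.

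More concretely, since $\gamma(\tau;x,t) = \xi(\tau + \xi^*(x) - t)$, the map $\tau \mapsto \gamma(\tau;x,t)$ jumps at $\tau_0$ if and only if $\xi$ jumps at the point $\tau_0 + \xi^*(x) - t$. Hence
\[
M = \{(x,t) \in \R^2 : \tau_0 + \xi^*(x) - t \in T\},
\]
where $T = (T_i)_{i \in \N}$ is the set of jump points of $\xi$. Since $\xi^*$ is Borel measurable and $T$ is countable (hence Borel), the set $M$ is Borel measurable in $\R^2$.

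Next I would compute the $x$-sections. For fixed $x \in \R$, the section
\[
M_x = \{t \in \R : (x,t) \in M\} = \{\tau_0 + \xi^*(x) - T_i : i \in \N\}
\]
is a countable subset of $\R$, and therefore has one-dimensional Lebesgue measure zero. By Tonelli's theorem,
\[
\lambda(M) = \int_\R \lambda(M_x) \, \dd x = 0,
\]
which is the claim.

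The argument is essentially bookkeeping; the only subtle point is the measurability of $M$, which rests on the Borel measurability of $\xi^*$ (noted in the text) together with the countability of $T$. No analytical obstruction arises, because the countability of jumps of an increasing c\`adl\`ag function is already built into the setup.
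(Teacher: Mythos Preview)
Your proof is correct and follows essentially the same approach as the paper: both rewrite $M$ as the set where $\tau_0 + \xi^*(x) - t$ lies in the countable jump set $T$, observe measurability via the Borel measurability of $\xi^*$, and apply Fubini/Tonelli using that each $x$-section is countable. The only cosmetic difference is that the paper first decomposes $M = \bigcup_i M_i$ and shows each $M_i$ has singleton $x$-sections, whereas you keep $M$ whole and note its $x$-sections are countable.
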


\begin{proof} Letting
\[
  M = \{(x,t): \tau_0 + \xi^*(x)-t \in T\} = \bigcup_{i\in\N}M_i
\]
where $M_i=\{(x,t): \tau_0 + \xi^*(x)-t = T_i\}$, it suffices to check that each $M_i$ has Lebesgue measure zero. But each $M_i$ is jointly measurable, and for each $x$ the set $M_i(x)=\{t: \tau_0 + \xi^*(x)-t = T_i\}$ is a singleton. Hence
\[ \lambda(M_i) = \int_\R \lambda(M_i(x))\d x =0 \]
by Fubini's theorem. \hfill \qed
\end{proof}

%
\subsection{Convergence of characteristic curves}
\label{subsec:convchar}

We now apply Lemma\;\ref{lem:cadlag} to a path $t\to X(\omega,t)=\xi(t)$ of the L\'evy process $X$ constructed in subsection\;\ref{subsec:stochmod}. Since L\'evy processes are c\`adl\`ag almost surely, there is $\Omega_0\in \F$ with $\P(\Omega_0)=1$ such that $X(\omega,\cdot)$ is c\`adl\`ag for all $\omega\in\Omega_0$.
With the notation of subsections\;\ref{subsec:dyadic} and \ref{subsec:stochmod}, let
\[
\Gamma^{(N)}(\omega; \tau; x,t)=X^{(N)}\big(\omega;\tau+(X^{(N)}(\omega))^{-1}(x)-t\big)
\]
and
\[
\Gamma(\omega; \tau; x,t)=X\big(\omega;\tau+X(\omega)^*(x)-t\big).
\]
%
%
\begin{proposition}\label{prop:conv}
	Let $\omega\in\Omega_0$.
	\begin{itemize}
		\item[(1)] \ For a.e.\ $(x,t)\in \R^2$ it holds that
		$$\Gamma^{(N)}(\omega;0;t,x)\rightarrow \Gamma(\omega;0;t,x)\quad{\rm as}\quad N\to\infty.$$
		
		\item[(2)] \  Let $u_0$ be bounded and continuous, $1\leq p < \infty$ and $K\subseteq \R^2$ compact. Then
		\begin{align}
			\int_K \left| u_0\big(\Gamma^{(N)}(\omega;0;t,x)\big)-u_0\big(\Gamma(\omega;0;t,x)\big)\right|^p \d \lambda(x,t)\rightarrow 0
		\end{align}
   as $N\to\infty$.
		\item[(3)] \  $\Gamma^{(N)}(\cdot;0;\cdot,\cdot)\to\Gamma(\cdot;0;\cdot,\cdot)$ as $N\to \infty$ with convergence $\P\otimes \lambda$-a.e.
		\item[(4)] \  Let $u_0$ be bounded and continuous,  $1\leq p < \infty$ and $K\subseteq \R^2$ compact. Then
		\begin{align}
			\int_{\Omega\times K} \left| u_0\big(\Gamma^{(N)}(\omega;0;t,x)\big)-u_0\big(\Gamma(\omega;0;t,x)\big)\right|^p \d (\P\otimes\lambda)(\omega,x,t)\rightarrow 0
		\end{align}
		as $N\to \infty$.
		\end{itemize}
\end{proposition}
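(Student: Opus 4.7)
The plan is to bootstrap the four parts by combining Lemmas~\ref{lem:cadlag} and~\ref{lem:zeromeasure} with Fubini's theorem and dominated convergence. Parts~(1) and~(3) deliver the pointwise/a.e.\ convergence of characteristics, and parts~(2) and~(4) follow as routine $L^p$-consequences once $u_0$ is bounded and continuous.

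For part~(1), I fix $\omega\in\Omega_0$ and set $\xi=X(\omega,\cdot)$. After possibly shrinking $\Omega_0$ within the full-probability set on which $X(\omega,\cdot)$ is c\`adl\`ag, strictly increasing, and tends to $\pm\infty$ at $\pm\infty$ (immediate for a subordinator with positive drift), Lemma~\ref{lem:zeromeasure} applied with $\tau_0=0$ says that the $(x,t)$ for which $\tau\mapsto\Gamma(\omega;\tau;x,t)$ has a jump at $\tau=0$ form a Lebesgue null set. At every other $(x,t)$, Lemma~\ref{lem:cadlag} applied pathwise yields $\Gamma^{(N)}(\omega;0;x,t)\to\Gamma(\omega;0;x,t)$. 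Part~(2) is then immediate: the integrand $|u_0(\Gamma^{(N)})-u_0(\Gamma)|^p$ is dominated by the constant $(2\|u_0\|_\infty)^p$, which is integrable on the compact set $K$, and by part~(1) together with the continuity of $u_0$ it converges to $0$ at almost every $(x,t)\in K$; dominated convergence concludes.

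For part~(3), I would first establish joint $\F\otimes\B(\R^2)$-measurability of the maps $(\omega,x,t)\mapsto\Gamma^{(N)}(\omega;0;x,t)$ and $(\omega,x,t)\mapsto\Gamma(\omega;0;x,t)$. Joint measurability of the c\`adl\`ag process $X$ transfers to the piecewise linear interpolation $X^{(N)}$ and to the generalized inverses $(X^{(N)})^{-1}$ and $X^*$ via the standard identity $\{(\omega,x):X^*(\omega,x)\leq t\}=\{(\omega,x):X(\omega,t)\geq x\}$. Hence the exceptional set $E=\{(\omega,x,t):\Gamma^{(N)}(\omega;0;x,t)\not\to\Gamma(\omega;0;x,t)\}$ is measurable, and by part~(1) its $\omega$-slice $E_\omega$ is Lebesgue null for every $\omega\in\Omega_0$; Fubini's theorem then gives $(\P\otimes\lambda)(E)=0$. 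Part~(4) follows once more by dominated convergence, now with the integrable dominant $(2\|u_0\|_\infty)^p$ on the $\P\otimes\lambda$-finite set $\Omega\times K$, using part~(3) for the pointwise convergence.

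The principal technical point to verify carefully is the joint measurability entering part~(3); everything else is a direct application of the preceding lemmas together with standard measure-theoretic tools.
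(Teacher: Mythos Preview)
Your proposal is correct and follows essentially the same route as the paper: parts~(1)--(2) come from Lemmas~\ref{lem:cadlag} and~\ref{lem:zeromeasure} plus dominated convergence, and parts~(3)--(4) from joint measurability, Fubini, and dominated convergence again. The only cosmetic difference is that the paper establishes joint measurability by invoking the measurability of the evaluation map $(t,\xi)\mapsto\xi(t)$ on $\R\times\mathcal{D}$ and then works with the jump set $\mathcal{M}=\{(\omega,x,t):\Gamma(\omega;\cdot;x,t)\text{ jumps at }0\}$ directly, whereas you work with the non-convergence set $E$; since $E\subseteq\mathcal{M}$ by Lemma~\ref{lem:cadlag}, the two arguments are interchangeable.
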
	
\begin{proof} First we notice that $\Gamma(\omega; \tau; x,t)=X\big(\omega;\tau+X(\omega)^*(x)-t\big)$ is jointly measurable in all variables. This follows by measurability of the mapping $\R\times \mathcal{D}\rightarrow \R,\;(t,\xi)\to \xi(t)$, cf.\ \cite[p.~132]{Sato:1999}, where $\mathcal{D}$ is the space of c\`adl\`ag functions endowed with the $\sigma$-algebra generated by the coordinate mappings $\xi\to \xi(t)$, $t\in\R$.
	
(1) follows from Lemma\;\ref{lem:zeromeasure}, and (2) follows from Lebesgue's convergence theorem.

(3) We first convince ourselves that the exceptional set of those $(\omega,x,t)$ at which $\Gamma(\omega; \tau; x,t)$ jumps at $\tau = 0$ is jointly measurable, that is
\begin{eqnarray*}		
\mathcal{M} & = & \menge{(\omega,x,t)\in \Omega_0\times \R^2: \Gamma(\omega;0; x,t) - \Gamma(\omega; 0-; x,t) \neq 0}\\
   & = & \menge{(\omega,x,t)\in \Omega_0\times \R^2: X\big(\omega;X(\omega)^*(x)-t\big)\neq \lim_{\tau\to 0-}X\big(\omega;X(\omega)^*(x)-t + \tau\big)}
\end{eqnarray*}
is $\F\otimes\mathcal{B}(\R^2)$-measurable. It follows from the joint measurability of $X\big(\omega;\tau+X(\omega)^*(x)-t\big)$ that the function $d: \Omega_0\times \R^2\to \R$,
\[
	d(\omega,x,t) = X\big(\omega;X(\omega)^*(x)-t\big) - \lim_{\tau\to 0-}X\big(\omega;X(\omega)^*(x)-t + \tau\big)
\]
is measurable and therefore $\mathcal{M}=d^{-1}(\R\setminus \menge{0})\in \F\otimes \mathcal{B}(\R^2)$.

The fact that $\mathcal{M}$ has measure zero, hence (3), is a consequence of Fubini's theorem. Indeed, for $\omega\in\Omega_0$, let
$M(\omega)=\mathcal{M}\cap (\{\omega\}\times \R^2)$. By Lemma\;\ref{lem:zeromeasure} applied with $\tau_0 = 0$ it follows that $M(\omega)$ has Lebesgue measure zero. Also, $\P(\Omega_0) = 1$. Thus
$\P\otimes \lambda(\mathcal{M})=\int_{\Omega}\lambda(M(\omega))\d \P(\omega) =0$.

(4) follows immediately from (2) and (3). \hfill\qed
\end{proof}

\subsection{Convergence of approximate solutions}
\label{subsec:convsolu}

We return to the transport equation in the discrete stochastic Goupillaud medium
\begin{equation} \label{eq:transN}
\begin{array}{rcl}
\dfrac{\p}{\p t} U^{(N)}(\omega;t,x) + C^{(N)}(\omega;x) \dfrac{\p}{\p x} U^{(N)}(\omega;t,x) & = & 0\\[10pt]
U^{(N)}(0,x)  & = & u_0(x)
\end{array}
\end{equation}
with\begin{align} \label{speedparameterN}
C^{(N)}(\omega;x) = \sum_{k=-\infty}^{\infty} \frac{\Delta X_k^{(N)}(\omega)}{\Delta t^{(N)}} \1_{[X_{k-1}^{(N)}(\omega),X_{k}^{(N)}(\omega))}(x)
\end{align}
where $\Delta X_k^{(N)}$ is derived from the L\'evy process $X$ as in subsection\;\ref{subsec:stochmod}. To be precise about the solution concept, assume that $u_0$ belongs to the Sobolev space $W^{1,1}_{\sf loc}(\R)$. Note that this implies that $u_0$ is a continuous function. At fixed $\omega$, the transport coefficient $C^{(N)}(\omega;\cdot)$ is a piecewise constant, locally bounded function, and the characteristic curves $\Gamma^{(N)}(\omega; \tau; x,t)$ are piecewise linear, continuous functions. We put
\[
    U^{(N)}(\omega;t,x) = u_0\big(\Gamma^{(N)}(\omega; 0; x,t)\big).
\]
It is straightforward to check that $U^{(N)}(\omega;\cdot,\cdot)$ belongs to $W^{1,1}_{\sf loc}(\R^2)$ and is continuous. Taking weak derivatives in the sense of $W^{1,1}_{\sf loc}(\R^2)$ and performing the multiplication with the $L^\infty_{\sf loc}$-function $C^{(N)}(\omega;\cdot)$ in $L^1_{\sf loc}(\R^2)$ shows that $U^{(N)}(\omega;\cdot,\cdot)$ satisfies the equation (\ref{eq:transN}) in the sense of the latter space. Further, the initial data are taken as continuous functions. In this sense, $U^{(N)}(\omega;\cdot,\cdot)$ is a pathwise solution to (\ref{eq:transN}). Define
\[
   U(\omega;t,x) = u_0\big(\Gamma(\omega; 0; x,t)\big).
\]
With the results of subsection \ref{subsec:convchar} we are now in the position to formulate convergence of the approximate solutions
$U^{(N)}$ to $U$.
\begin{proposition} Let $u_0 \in W^{1,1}_{\sf loc}(\R)$. Then
\begin{itemize}
\item[(1)]\ $\lim_{N\to\infty} U^{(N)}(\omega;t,x) = U(\omega;t,x)$ pointwise $\P\otimes \lambda$-a.e. and
\[
   \lim_{N\to\infty} {\EE}^\P\|U^{(N)} - U\|_{L^p(K)} = 0
\]
whenever $K$ is a compact subset of $\R^2$ and $1\leq p < \infty$.

\item[(2)] \ If the Fourier transform of $u_0$ belongs to $L^1(\R)$, then $U$ has the Fourier integral operator representation
\[
   U(\omega;t,x) = \frac1{2\pi}\iint \ee^{i(\Gamma(\omega;t,x)-y)\eta}u_0(y)\, \dd y\dd\eta.
\]
\end{itemize}
\end{proposition}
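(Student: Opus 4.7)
The argument for (1) naturally splits into the pointwise and the $L^p$ claims. Since $u_0 \in W^{1,1}_{\sf loc}(\R)$ embeds into $C(\R)$ in one dimension, continuity of $u_0$ composed with the $\P\otimes\lambda$-a.e.\ convergence $\Gamma^{(N)} \to \Gamma$ supplied by Proposition~\ref{prop:conv}(3) gives $U^{(N)}(\omega;t,x)\to U(\omega;t,x)$ for $\P\otimes\lambda$-a.e.\ $(\omega,x,t)$, which is the pointwise assertion.

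For the $L^p$ claim my plan is a two-stage dominated convergence argument. Fix $\omega\in\Omega_0$ and $K\subseteq[-a,a]^2$ compact. Because $X^{(N)}(\omega)$ is the piecewise linear interpolation of $X(\omega)$ through the grid points, the inverses $(X^{(N)}(\omega))^{-1}$ restricted to $[-a,a]$ are controlled uniformly in $N$ by the first hitting times of $\pm(a+1)$ by the c\`adl\`ag path $X(\omega)$; together with monotonicity this confines $\Gamma^{(N)}(\omega;0;K)$ to a random interval $[-R(\omega),R(\omega)]$ for all (sufficiently large) $N$. Pathwise DCT on $K$, with dominating function $\max_{|y|\leq R(\omega)}|u_0(y)|$, then yields $\|U^{(N)}(\omega;\cdot)-U(\omega;\cdot)\|_{L^p(K)}\to 0$. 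To lift this to convergence under ${\EE}^\P$, I would introduce the bounded continuous truncations $u_0^R=u_0\chi_R$ with $\chi_R\equiv 1$ on $[-R,R]$ and smooth compactly supported, apply Proposition~\ref{prop:conv}(4) to $u_0^R$, and control the two remainders by a triangle inequality:
\[
  {\EE}^\P\|U^{(N)}-U\|_{L^p(K)} \leq {\EE}^\P\|U^{(N)}-u_0^R(\Gamma^{(N)})\|_{L^p(K)} + o(1) + {\EE}^\P\|u_0^R(\Gamma)-U\|_{L^p(K)},
\]
where each remainder is supported on $\{R(\omega)>R\}$. The main obstacle will be showing these remainders vanish as $R\to\infty$ uniformly in $N$; this requires integrability of $\max_{|y|\leq R(\omega)}|u_0(y)|$, i.e.\ moment control of the subordinator $X$ matched to the growth of $u_0$ at infinity. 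This is the only step in the argument that is not purely soft and will dictate whether any additional assumption on $u_0$ (e.g.\ boundedness or polynomial growth) is needed beyond $W^{1,1}_{\sf loc}$.

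For (2) my plan is simply Fourier inversion. Because $\hat u_0\in L^1(\R)$ and $u_0$ is continuous, the inversion formula
\[
  u_0(y) = \frac{1}{2\pi}\int_\R \ee^{iy\eta}\hat u_0(\eta)\,\dd\eta
\]
holds at every $y\in\R$. Substituting $y=\Gamma(\omega;0;x,t)$ and expanding $\hat u_0(\eta)=\int \ee^{-iy\eta}u_0(y)\,\dd y$ immediately yields the claimed Fourier integral operator representation, understood as an iterated integral ($y$ first, then $\eta$), with absolute convergence of the outer integral guaranteed by $|\hat u_0|\in L^1$. No probabilistic input is needed here beyond the pathwise definition of $\Gamma$.
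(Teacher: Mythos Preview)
Your argument for the pointwise part of (1) is exactly what the paper does: invoke Proposition~\ref{prop:conv}(3) and use that $W^{1,1}_{\sf loc}(\R)\hookrightarrow C(\R)$. For the $L^p$-in-expectation part, the paper simply writes ``evident from Proposition~\ref{prop:conv}'', i.e.\ it applies part~(4) of that proposition directly. You noticed what the paper glosses over: part~(4) is stated for \emph{bounded} continuous $u_0$, while here only $u_0\in W^{1,1}_{\sf loc}(\R)$ is assumed. Your truncation/DCT scheme is the natural way to close that gap, and your diagnosis is accurate---without some moment control on the subordinator (or boundedness/growth control on $u_0$) the remainder terms supported on $\{R(\omega)>R\}$ need not be uniformly small in $N$, so the statement as written may indeed require an extra hypothesis. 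In short, your route for (1) coincides with the paper's but is more careful about the boundedness issue.

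For (2) you take a genuinely different and shorter path: you apply the Fourier inversion formula directly at the point $\Gamma(\omega;0;x,t)$, which is legitimate since $\hat u_0\in L^1$ and $u_0$ is continuous. The paper instead first writes the FIO representation for $U^{(N)}$ (again by Fourier inversion) and then passes to the limit $N\to\infty$ inside the integral via Proposition~\ref{prop:conv}. Your approach avoids the limiting procedure entirely and needs no probabilistic input; the paper's approach has the mild advantage of exhibiting the FIO for $U$ as the limit of the FIOs for the approximate solutions, which is closer in spirit to the convergence theme of the section, but analytically your argument is cleaner.
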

\begin{proof}
(1) is evident from Proposition\;\ref{prop:conv}. Concerning (2), observe that
\[
   U^{(N)}(\omega;t,x) =
      u_0\big(\Gamma^{(N)}(\omega; 0; x,t)\big) = \frac1{2\pi}\iint \ee^{i(\Gamma^{(N)}(\omega;t,x)-y)\eta}u_0(y)\, \dd y\dd\eta
\]
as shown by taking Fourier transforms, where the double integral converges as an iterated integral. Proposition\;\ref{prop:conv} allows us to take the limit as $N\to\infty$ inside the integral, whence the assertion. \hfill \qed
\end{proof}
Note that a priori there is no meaning for $u$ to be a solution of the transport equation\;\ref{eq:trans} other than being a limit of approximate solutions.

For the sake of illustration, we show two realizations of the limiting solutions. The initial value $u_0$  is taken as a triangular function, the realizations of $U$ are shown at times $t = 1,2,3$. We use two different L\'evy processes as drivers $X$ (cf. subsection~\ref{subsec:stochmod}). In the first picture in Figure~2, $X$ is taken as a Gamma process, in the second picture, $X$ is a Poisson process, both with positive drift. The solutions have constant parts, which are created if the L\'evy process jumps at this point.
\begin{figure}[htb]\label{fig:U_realization}
	\centering
\includegraphics[width = 0.8\textwidth]{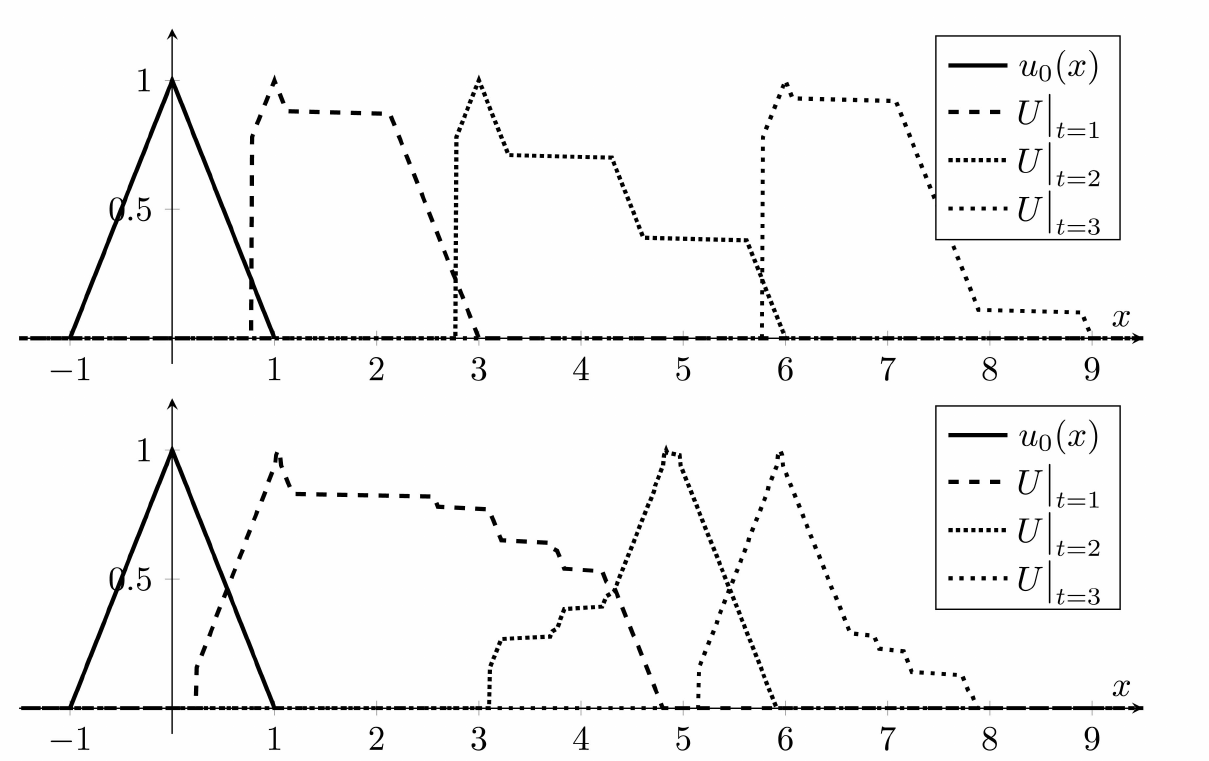}
		\caption{ A trajectory of the  solution $U(x,t)$ at several time points. Above: The generating L\'evy process $X$ is Gamma distributed with scale parameter $k=1$, shape parameter $\theta=1$ and drift $d=1$. Below: The generating L\'evy process $X$ is Poisson distributed with jump size 1, intensity $c=1$ and drift $d=1$.}
\end{figure}

\section{Conclusion}

A Goupillaud medium is a piecewise constant layered medium such that the thickness of each layer is proportional to the corresponding propagation speed. We have developed a set-up for a specific stochastic Goupillaud medium in which the propagation speeds (or equivalently the thickness of the layers) are given by infinitely divisible random variables. Using a dyadic refinement, these random variables could be constructed as increments of a strictly increasing L\'evy process. We have shown that the one-dimensional transport equation can be solved in such a medium, and that the characteristic curves converge to shifted trajectories of the underlying L\'evy process as the time step goes to zero. If the initial data are sufficiently regular, the corresponding solutions converge pathwise and in the $p$-th mean to a limiting function, which in addition can be computed by means of a Fourier integral operator.

At this stage, several questions remain open. The first issue is the probability distribution of the limiting characteristic curves $\Gamma(\omega; \tau; x,t)=X\big(\omega;\tau+X(\omega)^*(x)-t\big)$, and subsequently of the limiting solution $U(\omega;t,x) = u_0\big(\Gamma(\omega; 0; x,t)\big)$. The second question is how one can give a meaning to the limiting propagation speed $c(x)$ as a (generalized) function of $x$. Given a positive answer to this question, one may finally ask if there is solution concept that would allow one to interpret $U(\omega;t,x)$ as a solution in some sense. All these issues are the subject of ongoing research.

\subsection*{Acknowledgement}
The second author acknowledges support through the research project P-27570-N26 ``Stochastic generalized Fourier integral operators''
of FWF (The Austrian Science Fund).
The third author acknowledges support through the Bridge Project No. 846038 ``Fourier Integral Operators in Stochastic Structural Analysis'' of FFG (The Austrian Research Promotion Agency).

\bibliographystyle{plain}

\end{document}